\numberwithin{equation}{section}
\theoremstyle{plain}%default
\newtheorem{thm}{Theorem}
\newtheorem{prop}[thm]{Proposition}
\newtheorem{cor}[thm]{Corollary}
\newtheorem{lem}[thm]{Lemma}
\newtheorem{defn}[thm]{Definition}
\newtheorem*{notation}{Notation}
\newtheorem{example}[thm]{Example}
\newtheorem*{note}{Note}
\theoremstyle{remark}
\newtheorem{rem}[thm]{Remark}
\newcommand{\C}{\mathbb{C}}
\newcommand{\R}{\mathbb{R}}
\newcommand{\N}{\mathbb{N}}
\newcommand{\p}{\partial}
\DeclareMathOperator{\grad}{\p}
\def\({(\!(}
\def\){)\!)}
\renewcommand{\k}{\mathsf{k}}
\title[THE {\L}OJASIEWICZ EXPONENT]{THE {\L}OJASIEWICZ EXPONENT  FOR WEIGHTED HOMOGENEOUS POLYNOMIAL
WITH ISOLATED SINGULARITY}
\author{Ould M Abderrahmane}
\address{D\'eparement de Math\'ematiques, Universit\'e des Sciences,
de Technologie et de Médecine BP. 880, Nouakchott, Mauritanie}
\email{ymoine@univ-nkc.mr} \subjclass[2010]{Primary  14B05, 32S05.}
\newcommand{\abstracttext}{ The purpose of this paper is to give an explicit formula of the
{\L}ojasiewicz exponent of an isolated weighted homogeneous
singularity in terms of its weights.}
\begin{document}
\begin{abstract} \abstracttext \end{abstract} \maketitle

Let $f \colon (\C^n, 0) \rightarrow (\C, 0)$ be a holomorphic
function with an isolated critical point at $0$. The {\L}ojasiewicz
exponent $L(f)$ of $f$ is by definition
$$
L(f)=\inf\{\lambda> 0\; \,:\;\, \mid\text{grad} f\mid\geq
\text{const.} \mid z\mid^{\lambda}\,\text{ near zero }\},
$$
%where $\text{grad} f(x) = (\frac{\partial f}{\partial
%z_1}(z),\cdots, \frac{\partial f}{\partial z_n}(z))$.
It is well
known(see \cite{LT}) that the {\L}ojasiewicz exponent can be
calculated by means of analytic paths
\begin{equation}\label{0.1}
L(f)=\sup\left\{\frac{\text{ord}(
\text{grad}f(\varphi(t)))}{\text{ord}(\varphi(t))}\; :\; 0\neq
\varphi(t)\in \C\{t\}^n,\; \varphi(0)=0\right\},
\end{equation}
where $\text{ord}(\phi):=\inf_i\{\text{ord}(\phi_i)\}$ for $\phi\in
{\C\{t\}}^n$. By definition, we put $\text{ord}(0) = +\infty$.

{\L}ojasiewicz exponents have important applications in singularity
theory, for instance, Teissier \cite{BT} showed that
$C^0$-sufficiency degree of $f$ (i.e., the minimal integer $r$
 such that $f$ is topologically equivalent to $f + g$ for all $g$ with
ord$(g) \geq r + 1$)  is equal to $[L(f)]+1$, where $[L(f)]$ denote
integral part of $L(f)$. Despite deep research of experts in
singularity theory, it is not proved yet that {\L}ojasiewicz
exponent $L(f)$ is a topological invariant of $f$ (in contrast to
the Milnor number). An interesting  mathematical problem is to give
formulas for $L(f)$ in terms of another invariants of $f$ or an
algorithm to compute it.
 In the two-dimensional case there
are many explicit formulas for $L(f)$ in various terms (see
\cite{CK1}, \cite{CK2}, \cite{TCKYCL}, \cite{L}).
 Estimations of the {\L}ojasiewicz exponent
 in the general case can be found in \cite{OMA}, \cite{TF}, \cite{BL}, \cite{AP1}.

 The aim of this paper is to compute the {\L}ojasiewicz exponent
 for the classes of weighted
homogeneous isolated singularities in terms of the weights. In
particular, we generalize a formula for $L(f)$ of Krasi\'nski,
Oleksik and  P{\l}oski  \cite{KOP} for weighted homogeneous surface
singularity. This was already announced  by Tan, Yau and Zuo
\cite{TYZ}, but thier paper seems to have some gaps in the proof of
proposition 3.4. We were motived by their papers. However, our
considerations are based on other ideas. More precisely, we use the
notion of weighted homogenous filtration introduced by  Paunescu in
\cite{LP}, the geometric characterization of $\mu$-constancy in
\cite{LS, BT} and the result of Varchenko \cite{ANV}, which
described the $\mu$-constant stratum of weighted homogeneous
singularities in terms of the mixed Hodge structures.

Moreover, we show that the {\L}ojasiewicz exponent is invariant for
all $\mu$-constant deformation of weighted homogeneous singularity,
which gives an affirmative partial answer to  Teissier's conjecture
\cite{BT}.

\begin{notation}
To simplify the notation, we will adopt the following conventions\,:
for a function $F(z, t)$ we denote by $\grad F$ the gradient of $F$
and by $\grad_z F$ the gradient of $F$ with respect to variables
$z$.

Let $\varphi,\,\, \psi \colon(\C^n, 0) \to \R$ be two function
germs. We say that $ \varphi(x)\lesssim \psi(x)$ if there exists a
positive constant $C> 0$ and an open neighborhood $U$ of the  origin
in $\C^n$ such that $\varphi(x)\leq C \; \psi(x)$, for all $x \in
U$. We write $ \varphi(x) \sim \psi(x)$  if $\varphi(x) \lesssim
\psi(x)$ and $\psi(x)\lesssim \varphi(x)$. Finally,
$|\varphi(x)|\ll|\psi(x)|$ (when $x$ tends to $x_0$) means
$\lim_{x\to x_0}\frac{\varphi(x)}{\psi(x)}=0$.

\end{notation}

\bigskip
\section{Weighted  homogeneous filtration }
\bigskip
Let $\N$ be the set of nonnegative integers and  $\mathcal{O}_n$
denote the ring of analytic function germs $f \colon (\C^n, 0) \to
(\C, 0)$. The Milnor number of a germ $f$, denoted by $\mu(f)$, is
algebraically defined as the $\text{dim}\,\mathcal{O}_n/{J(f)}$,
where  $J(f) $ is the Jacobian ideal in $\mathcal{O}_n$ generated by
the partial derivatives $\{\frac{\grad f}{\grad
z_1},\cdots,\frac{\grad f}{\grad z_n}\}$. Let $F \colon (\C^n \times
\C, 0) \to (\C, 0)$ be the deformation of $f$ given by $F(z, t) =
f(z) + \sum c_{\nu}(t)z^{\nu}$, where $c_{\nu}\colon (\C, 0) \to
(\C, 0)$ are germs of holomorphic functions. We use the notation
$F_t(z) = F(z, t)$ when $t$ is fixed.

From now, we shall fix a system of positive integers $w=(w_{1},
\dots , w_{n})\in (\N-\{0\})^n$, the weights of variables $z_{i},$
$w(z_{i})=w_{i},$  $1\leq i\leq n$, and a positive integer $d\geq 2
w_i$ for $i=1,\dots, n$,
 then a polynomial $f \in\C[z_1,\dots, z_n]$ is called weighted
homogeneous of degree $d$ with respect the weight
$w=(w_1,\dots,w_n)$ (or type $(d; w)$) if $f$ may be written as a
sum of monomials $z_1^{\alpha_1}\cdots z_n^{\alpha_n}$ with
\begin{equation}\label{weight}
\alpha_1 w_1 +\dots +\alpha_n w_n =d.
\end{equation}

Comparing these weights with the  $w'=(w_1'\dots,w_n') $ defined in
\cite{KOP, TYZ}, from (\ref{weight}), we get $w'(z_i)=\frac{d}{w_i}$
for $i=1,\dots ,n$, so it follows that $w_i'\geq 2$ if and only if
$d\geq 2w_i$. Also, we have
$$
\max_{i=1}^n(w_i'- 1)=\max_{i=1}^n(\frac{d}{w_i}-1).
$$

We may introduce (see \cite{LP}) the function $\rho(z)=
\left(|z_1|^{\frac{2}{w_1}}+\cdots
+|z_n|^{\frac{2}{w_n}}\right)^{\frac{1}{2}} $. We also consider the
spheres associated to this $\rho$
$$
S_{r}=\{z\in \C^{n}\;:\; \,\rho(z)=r\}, \quad \; r>0.
$$

Here $\cdot$ means the weighted action, with respect to the $\C^*$
action defined below
$$
t\cdot z=(t^{w_1}z_1,\dots, t^{w_n}z_n)
$$

\begin{defn} Using $\rho$,
we define a singular Riemannian metric on $\C^n$ by the following
bilinear form
$$
\langle \rho^{w_i}\frac{\partial}{\partial x_i}\,,\,
\rho^{w_j}\frac{\partial}{\partial x_j} \rangle =
\delta_{i,j}:=\begin{cases}
1 & \text{ if } i=j\\
0 & \text{ if } i\neq j
\end{cases}
$$

 We will denote by $\text{grad}_w$ and  $\parallel\; \parallel_w$, the corresponding gradient and norm
associated with this Riemannian metric  (for more details about
these see \cite{LP}).
\end{defn}

Let $f\in \mathcal{O}_n$. We denote the Taylor expansion of $f$ at
the origin by $\sum c_{\nu}z^{\nu}$. Setting $H_j(z)=\sum
c_{\nu}z^{\nu}$ where the sum is taken over $n$ with $<w, \nu>=
w_1+\cdots+w_n=j$, we can write the weighted Taylor expansion $f$
$$
f(z)=H_d(z)+H_{d+1}(z)+\cdots\; ; H_d \neq  0.
$$
We call $d$ the weighted degree of $f$ and  $H_d$ the weighted
initial form of $f$ about the weight. Furthermore, for any $f\in
\mathcal{O}_n$ we get
\begin{equation}\label{1.1}
\|\text{grad}_wf(z)\|_w\lesssim \rho^{d_w(f)}(z),
\end{equation}
where $d_w(f)$ denotes the degree of $f$ with respect to $w$.
Indeed, as all nonzero $z$, we find $\frac{1}{\rho(z)}\cdot z\in
S_1$, moreover, we have $\frac{\partial H_j}{\partial z_i}$  is zero
or a weighted homogeneous polynomial of degree $d-w_j$, then we
obtain
$$
\|\text{grad}_w H_j(\frac{1}{\rho(z)}\cdot z)\|_w=
\frac{\|\text{grad}_w H_j(z)\|_w}{\rho(z)^j} \lesssim 1.
$$
Therefor,
$$
\parallel\text{grad}_wf(z)\parallel_w\lesssim\sum_{j\geq d_w(f)}\parallel \text{grad}_wH_j(z) \parallel_w\lesssim
\rho^{d_w(f)}(z).
$$

\begin{prop}\label{Hassane}
Let   $f\in \mathcal{O}_n$ be a weighted homogeneous isolated
singularity of type $(d; w)$ at $0\in \C^n $. Then
\begin{equation}\label{1.2}
\|\text{grad}_wf(z)\|_w \gtrsim \rho(z)^d.
\end{equation}
\end{prop}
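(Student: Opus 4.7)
The plan is to reduce the lower bound to a nonvanishing statement on the weighted unit sphere $S_1=\{z:\rho(z)=1\}$, by exploiting the $\C^*$-action $t\cdot z=(t^{w_1}z_1,\ldots,t^{w_n}z_n)$. This mirrors the rescaling argument that yielded the upper bound (1.1), except that now the positivity of the bound will come from compactness of $S_1$ together with the isolated singularity hypothesis, rather than from mere boundedness of a continuous function.

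First I would establish the scaling identity
\begin{equation*}
\|\text{grad}_w f(z)\|_w \;=\; \rho(z)^d\,\bigl\|\text{grad}_w f\bigl(\tfrac{1}{\rho(z)}\cdot z\bigr)\bigr\|_w, \qquad z\neq 0.
\end{equation*}
Since $f$ is weighted homogeneous of type $(d;w)$, each partial derivative $\partial f/\partial z_i$ is either zero or weighted homogeneous of degree $d-w_i$; writing $z=\rho(z)\cdot z'$ with $z'\in S_1$ then gives $|\partial f/\partial z_i(z)|=\rho(z)^{d-w_i}|\partial f/\partial z_i(z')|$. The metric convention $\langle\rho^{w_i}\partial/\partial x_i,\rho^{w_j}\partial/\partial x_j\rangle=\delta_{ij}$ contributes an extra factor $\rho^{2w_i}$ to each term of $\|\text{grad}_w f\|_w^2$, and these factors combine with $\rho(z)^{2(d-w_i)}$ to the uniform $\rho(z)^{2d}$ (using $\rho(z')=1$), which yields the displayed identity.

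It remains to bound $\|\text{grad}_w f(z')\|_w$ below on $S_1$. The sphere $S_1$ is closed in $\C^n$ and contained in the polydisc $\{|z_i|\leq 1\}$, hence compact. Since the ordinary gradient $\partial f$ vanishes only at the origin by hypothesis, and the weighted gradient differs from it coordinatewise by the nonvanishing factors $\rho^{2w_i}$ on $S_1$, the continuous function $z'\mapsto \|\text{grad}_w f(z')\|_w$ is strictly positive on $S_1$; therefore it attains a positive minimum $c>0$, and the scaling identity gives $\|\text{grad}_w f(z)\|_w\geq c\,\rho(z)^d$ everywhere. The main point that requires care is the bookkeeping in the scaling identity: one must match the factor $\rho^{2w_i}$ coming from the metric in the $i$-th direction with the homogeneity degree $d-w_i$ of the corresponding partial derivative so that the exponents in each coordinate direction conspire to the same $\rho^{2d}$ independently of $i$; once this identity is in hand, the rest is the standard compactness reduction already implicit in the proof of (1.1).
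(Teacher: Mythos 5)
Your argument is correct and essentially the same as the paper's: both exploit that each $\partial f/\partial z_i$ is weighted homogeneous of degree $d-w_i$ to get the scaling identity for $\|\text{grad}_w f\|_w$, and then invoke nonvanishing of the gradient on a weighted sphere (the paper uses $S_r$ for small $r$, where the isolated-singularity hypothesis applies directly; you use $S_1$, which requires the additional standard remark that for a weighted homogeneous polynomial any nonzero critical point $a$ would produce critical points $t\cdot a\to 0$, contradicting isolatedness). Apart from that cosmetic difference, the two proofs coincide.
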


\begin{proof}
Since $f$ has only  isolated singularity at the origin, then  for
small values of $r$  we have
\begin{equation}\label{1.3}
\| \text{grad}_w f(z)\|_w = \left( \sum_{i=1}^n |\rho^{w_i}(z)
\frac{\partial f}{\partial z_i}(z) |^2 \right)^{\frac{1}{2}}\gtrsim
1, \; \forall z\in S_r.
\end{equation}
 On the other hand, $\frac{\partial f}{\partial z_i}$ is weighted
 homogeneous of degree $d-w_i$ for $i=1,\dots, n$ and also, $\frac{r}{\rho(z)}\cdot z\in S_r$ for all nonzero $z$.
  Thus, by (\ref{1.3}) we obtain
$$
\| \text{grad}_w f(\frac{r}{\rho(z)}\cdot z)\|_w=r^d\frac{\|
\text{grad}_w f(z)\|_w}{\rho(z)^d}\gtrsim 1.
$$
This completes the proof of the proposition.
\end{proof}

\bigskip
\section{The results}
\bigskip

The main result of this paper is the following:

\begin{thm}\label{main1}
 Let $f \colon (\C^n, 0) \rightarrow (\C, 0)$
 be a weighted homogeneous polynomial of type $(d; w)$ with $d\geq 2
 w_i$ for $i=1,\dots, n$, defining an isolated singularity at the origin $0\in\C^n$. Then
$$
 L(f)=\max_{ i=1}^n (\frac{d}{w_i} - 1).
$$
\end{thm}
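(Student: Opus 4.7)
The plan is to prove the two inequalities $L(f)\le M$ and $L(f)\ge M$ separately, where $M := \max_i(d/w_i - 1) = (d-w_{\min})/w_{\min}$ and $w_{\min} := \min_i w_i$. The upper bound falls out of Proposition~\ref{Hassane} almost directly. Since $w_i \ge w_{\min}$ and $\rho(z) \le 1$ near the origin, one has $\rho^{2w_i}\le\rho^{2w_{\min}}$, so
$$
\rho(z)^{2w_{\min}}\,|\text{grad}\,f(z)|^{2} \;\ge\; \sum_{i=1}^n \rho^{2w_i}\Bigl|\tfrac{\partial f}{\partial z_i}\Bigr|^{2} = \|\text{grad}_w f\|_w^{2} \;\gtrsim\; \rho(z)^{2d},
$$
i.e.\ $|\text{grad}\,f| \gtrsim \rho^{d-w_{\min}}$. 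A short comparison between $\rho$ and the Euclidean norm gives $\rho(z)\gtrsim |z|^{1/w_{\min}}$: picking $i^{\ast}$ with $|z_{i^{\ast}}|=\max_i|z_i|$, we have $|z_{i^{\ast}}|\gtrsim |z|$ and $\rho^{2} \ge |z_{i^{\ast}}|^{2/w_{i^{\ast}}} \gtrsim |z|^{2/w_{\min}}$ for $|z|\le 1$. Combining yields $|\text{grad}\,f(z)| \gtrsim |z|^M$, forcing $L(f)\le M$.

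For the lower bound, which is the harder half, the plan is to invoke the $\mu$-constant deformation machinery announced in the introduction. The key auxiliary statement is that $L$ is constant on the space $W(d;w)$ of weighted homogeneous polynomials of type $(d;w)$ with isolated singularity at $0$. I would establish this by combining (i) Paunescu's weighted filtration, which produces controlled deformations inside $W(d;w)$; (ii) Varchenko's theorem, which via mixed Hodge structures identifies $W(d;w)$ as a single $\mu$-constant stratum (all members share the Milnor number $\mu=\prod(d/w_i-1)$); and (iii) the geometric characterization of $\mu$-constancy from \cite{LS,BT}, which along the deformation yields a Thom--Whitney type regularity, hence uniform gradient estimates preventing $L$ from jumping.

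Granted this invariance, it remains to exhibit one $g\in W(d;w)$ for which $L(g)=M$ can be checked by hand. When every $d/w_i$ is integral, the Brieskorn--Pham polynomial $g = \sum_i z_i^{d/w_i}$ serves: with $w_{j_0}=w_{\min}$, the path $\varphi(t) = t\,e_{j_0}$ gives $\text{ord}(\text{grad}\,g(\varphi))/\text{ord}(\varphi) = d/w_{j_0}-1 = M$, whence $L(g)\ge M$ via formula~(\ref{0.1}); when some $d/w_i$ is non-integral, the isolated singularity hypothesis forces $f$ to contain suitable cross-terms $z_i z_j^{k}$, and an analogous model can be built with a computation of the same shape. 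The deformation invariance then transfers $L(g)=M$ to $L(f)$. The main obstacle in the whole argument is step (iii) above: it is Teissier's conjecture in the weighted homogeneous category, and the combinatorial rigidity provided by Paunescu's filtration together with Varchenko's description of the stratum is precisely what makes it accessible here.
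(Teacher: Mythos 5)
Your upper bound $L(f)\le\max_i(d/w_i-1)$ is correct and is essentially the paper's own argument (Proposition \ref{Hassane} plus the comparison $\rho(z)\gtrsim|z|^{1/\min_i w_i}$). The lower bound, however, has a genuine gap. You propose to (a) show that $L$ is constant on the whole stratum $W(d;w)$ of weighted homogeneous polynomials of type $(d;w)$ with isolated singularity, and (b) transfer the value from a Brieskorn--Pham model. Step (a) is not proved: the tools you cite deliver only the \emph{lower semicontinuity} $L(f_{t_0})\le L(f_t)$ for $t$ near $t_0$ in a $\mu$-constant family (this is the P{\l}oski--Teissier result the paper quotes in the proof of Corollary \ref{main2}), and you yourself concede that full constancy is ``Teissier's conjecture in the weighted homogeneous category.'' In the paper that constancy is a \emph{consequence} of Theorem \ref{main1} (Corollary \ref{main2}), obtained by combining semicontinuity with the explicit upper bound; assuming it as an input makes the argument circular. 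Semicontinuity at the model $g$ plus the uniform upper bound does give $L(f_t)=M$ for $f_t$ \emph{near} $g$, but propagating this to an arbitrary $f\in W(d;w)$ requires an openness/closedness argument for $\{L=M\}$ that you do not supply.

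Step (b) also breaks down exactly where the problem is hard. When some $d/w_i$ is not an integer the minimal-weight variable $z_1$ may occur only in monomials $z_1^{q_1}z_i$ with $q_1\ge2$; then the coordinate-axis path gives $\operatorname{ord}(\operatorname{grad}g(\varphi))/\operatorname{ord}(\varphi)=q_1-1<d/w_1-1$, so the ``computation of the same shape'' does not yield $L(g)\ge M$ for any obvious model. This is precisely Case 3 of the paper's proof, where the authors cannot use a test curve directly: they invoke Lemma \ref{Key0} (Proposition 2 of \cite{KOP}), reduce to showing $V_{z_1}(f)\nsubseteq\{z_1=0\}$, and rule out the contrary by building the deformation $F=f+tz_1^{q_1}$, checking it is $\mu$-constant via condition (4) of Theorem \ref{le-saito-tessier}, and deriving a contradiction from Varchenko's constraint $d_w(z_1^{q_1})\ge d$. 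None of this is replaced by anything in your outline, so the lower bound remains unestablished in the essential case.
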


\begin{cor}\label{main2}
 Let $f \colon (\C^n, 0) \rightarrow (\C, 0)$
 be a weighted homogeneous polynomial of type $(d; w)$
  with $d\geq 2 w_i$ for $i=1,\dots, n$, defining an isolated singularity at the origin
in $\C^n$. For any deformation
 $F_t(z)=f(z) + \sum c_{\nu}(t)z^{\nu}$ for which $\mu(F_t)=\mu(f)$
  is called $\mu$-constant, then
 $L(F_t)$ is also constant.
\end{cor}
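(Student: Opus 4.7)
The plan is to deduce Corollary \ref{main2} from Theorem \ref{main1} by reducing the $\mu$-constant family $F_t$ to a family of weighted homogeneous polynomials of type $(d;w)$, modulo terms that are negligible for the \L ojasiewicz exponent.

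First, I would invoke Varchenko's description of the $\mu$-constant stratum of weighted homogeneous singularities (the result cited in the introduction): after a biholomorphic change of coordinates, one may assume that every monomial $z^\nu$ appearing in $F_t(z) = f(z) + \sum_\nu c_\nu(t)z^\nu$ satisfies $\langle w,\nu\rangle \ge d$. I would then decompose $F_t = f_t + h_t$, where $f_t$ collects the monomials of weighted degree exactly $d$ and $h_t$ collects those of weighted degree strictly greater than $d$. By construction $f_t$ is weighted homogeneous of type $(d;w)$ with $f_0 = f$, and since adding terms of higher weighted order to a weighted homogeneous isolated singularity does not change the Milnor number, one has $\mu(f_t)=\mu(f_t+h_t)=\mu(F_t)=\mu(f)$; in particular, $f_t$ still has an isolated critical point at $0$. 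Applying Theorem \ref{main1} to $f_t$ then yields $L(f_t)=\max_{i=1}^n(d/w_i-1)=L(f)$.

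It remains to show $L(F_t)=L(f_t)$. For the upper bound, I would combine Proposition \ref{Hassane} for $f_t$, which gives $\|\text{grad}_w f_t(z)\|_w \gtrsim \rho(z)^d$, with inequality (\ref{1.1}) applied to $h_t$, which gives $\|\text{grad}_w h_t(z)\|_w \lesssim \rho(z)^{d_w(h_t)}$ with $d_w(h_t)>d$; the triangle inequality then produces $\|\text{grad}_w F_t(z)\|_w \gtrsim \rho(z)^d$ near the origin. Converting this weighted inequality into the standard one, $|\text{grad}\, F_t(z)|\gtrsim |z|^{\max_i(d/w_i-1)}$, is precisely the passage carried out in the proof of Theorem \ref{main1}, so it can be reused verbatim and gives $L(F_t)\le L(f)$. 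For the reverse inequality, I would test the characterization (\ref{0.1}) on an analytic curve along the coordinate axis realizing the exponent $d/w_{i_0}-1$ for $f_t$ (where $w_{i_0}$ attains the maximum of $d/w_i-1$), and verify that the higher-weighted-order perturbation $h_t$ cannot lower the order of $\text{grad}\, F_t$ along this curve.

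The main obstacle will be two-fold: first, ensuring that Varchenko's classification can be applied uniformly in $t$, so that the weighted-filtration reduction holds throughout the whole parameter range and not merely infinitesimally; second, extracting from the proof of Theorem \ref{main1} the precise dictionary between the weighted inequality $\|\text{grad}_w F_t\|_w \gtrsim \rho^d$ and the Euclidean \L ojasiewicz inequality $|\text{grad}\, F_t|\gtrsim|z|^{\max_i(d/w_i-1)}$, and checking that the same conversion remains valid for the perturbed $F_t$ rather than only for the genuinely weighted homogeneous $f_t$.
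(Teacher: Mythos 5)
Your upper-bound half is essentially the paper's argument: the paper likewise invokes Varchenko to conclude that the perturbation has weighted degree $\geq d$, writes $F_t=f+tg_t$, combines Proposition \ref{Hassane} with (\ref{1.1}) to get $\|\text{grad}_w F_t(z)\|_w\gtrsim\rho^d(z)$ for small $|t|$, and then converts to the Euclidean inequality exactly as in the first part of the proof of Theorem \ref{main1}. Two small remarks on your bookkeeping: by keeping the fixed $f$ as principal part and the explicit factor $|t|$ in front of the perturbation, the paper never needs the constant in Proposition \ref{Hassane} to be uniform in $t$, whereas your decomposition $F_t=f_t+h_t$ does; and your assertion that $f_t$ automatically has an isolated critical point is circular as stated, since ``adding higher weighted-order terms does not change $\mu$'' already presupposes that the weighted initial part is nondegenerate (one can repair this via the Milnor--Orlik formula and the strict inequality $\mu>\prod_i(d/w_i-1)$ for degenerate initial forms, but it needs saying).

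The genuine gap is in the lower bound $L(F_t)\geq\max_i(d/w_i-1)$. You propose to test (\ref{0.1}) on ``an analytic curve along the coordinate axis realizing the exponent,'' but the proof of Theorem \ref{main1} shows that a coordinate axis realizes the extremal exponent only in Case 2 (when $z_1^{q_1}$ occurs and no $z_1z_i$ does); in Case 1 the witness is a weighted-monomial arc through a generic point, and in Case 3 the exponent is obtained from Lemma \ref{Key0}, i.e.\ along a branch of the polar variety $V_{z_1}(f)$, which is not a monomial curve and which moves when you add $h_t$. For such a branch there is no easy verification that higher weighted-order terms cannot change the order of $\text{grad}\,F_t$; this is exactly the difficulty the paper sidesteps by instead invoking the semicontinuity of the {\L}ojasiewicz exponent in $\mu$-constant families (P{\l}oski \cite{AP}, Teissier \cite{BT}), which yields $L(f)\leq L(F_t)$ directly (the analogous step in Corollary \ref{main3} likewise leans on Proposition 4.1 of \cite{BKO} rather than an explicit curve). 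Without one of these inputs your lower bound does not go through.
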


\begin{cor}\label{main3}
Let $f \colon (\C^n, 0) \rightarrow (\C, 0)$ be a holomorphic
function with $d_w(f)\geq 2w_i$ for $i=1,\dots, n$. If   the
weighted initial forms of $f$ define an isolated singularity at the
origin, then
$$
 L(f)=\max_{ i=1}^n (\frac{d}{w_i} - 1).
$$
\end{cor}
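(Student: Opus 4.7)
The plan is to reduce Corollary \ref{main3} to Theorem \ref{main1} by regarding $f$ as a weighted-higher-order perturbation of its weighted initial form. Let $d = d_w(f)$ and decompose $f = H_d + g$, where $H_d$ is the weighted initial form of $f$ and $g = \sum_{j > d} H_j$ collects the terms of strictly larger weighted degree. By hypothesis $H_d$ is a weighted homogeneous polynomial of type $(d;w)$ with $d \geq 2w_i$ and an isolated singularity at the origin, so Theorem \ref{main1} gives $L(H_d) = \max_{i}(d/w_i - 1)$. It therefore suffices to show $L(f) = L(H_d)$.

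The key estimate is that $f$ obeys exactly the same weighted gradient asymptotics as $H_d$. Proposition \ref{Hassane} applied to $H_d$ yields $\|\text{grad}_w H_d(z)\|_w \gtrsim \rho(z)^d$, while estimate (\ref{1.1}) applied to $g$ yields $\|\text{grad}_w g(z)\|_w \lesssim \rho(z)^{d+1}$. Since $\rho(z)^{d+1} \ll \rho(z)^d$ as $z \to 0$, the triangle inequality produces
$$
\|\text{grad}_w f(z)\|_w \geq \|\text{grad}_w H_d(z)\|_w - \|\text{grad}_w g(z)\|_w \gtrsim \rho(z)^d,
$$
with a matching upper bound from (\ref{1.1}) applied directly to $f$. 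In particular, $f$ has an isolated singularity at the origin and its weighted gradient satisfies $\|\text{grad}_w f(z)\|_w \sim \rho(z)^d$, just as in the weighted homogeneous case. The remainder of the argument is then a direct re-run of the proof of Theorem \ref{main1}: via the analytic-path characterization (\ref{0.1}), the upper bound $L(f) \leq \max_i(d/w_i - 1)$ follows by substituting an arbitrary analytic arc $\varphi(t)$ and comparing $\text{ord}(\text{grad}\, f(\varphi(t)))$ against $\text{ord}(\varphi(t))$, while the lower bound is obtained by testing against a suitable coordinate arc along which the $g$-contribution is of strictly larger weighted order than that of $H_d$ and hence does not affect the leading ratio.

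The main obstacle is the translation between the weighted setting (involving $\rho$ and $\|\cdot\|_w$) and the standard {\L}ojasiewicz setting (involving $|z|$ and $|\cdot|$). The weighted order of $\rho(\varphi(t))$ along a path depends on which coordinates of $\varphi$ vanish most slowly, so the estimate $\|\text{grad}_w f\|_w \gtrsim \rho^d$ must be unpacked path by path into an inequality of the form $|\text{grad}\, f(\varphi(t))| \gtrsim |\varphi(t)|^{L}$. This path-by-path bookkeeping is precisely what drives the proof of Theorem \ref{main1}, and the reduction above ensures that no new ingredient is required beyond verifying that the subdominant perturbation $g$ does not spoil either direction of the inequality.
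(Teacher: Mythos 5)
Your upper bound is exactly the paper's argument: decompose $f=H_d+g$ with $d_w(g)>d$, combine Proposition \ref{Hassane} for $H_d$ with estimate (\ref{1.1}) for $g$ to get $\|\text{grad}_w f(z)\|_w\gtrsim \rho(z)^d$, and then repeat the first half of the proof of Theorem \ref{main1} to conclude $L(f)\le \max_i(d/w_i-1)$. That part is fine.

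The lower bound is where your proposal has a genuine gap. The paper does \emph{not} re-run the arc constructions of Theorem \ref{main1} on the perturbed germ; it quotes Theorem \ref{main1} together with Proposition 4.1 of \cite{BKO}, which asserts precisely that $L(H_d)\le L(f)$ for a semiquasihomogeneous germ with isolated initial form. Your substitute --- ``test a suitable coordinate arc along which the $g$-contribution has strictly larger weighted order and hence does not affect the leading ratio'' --- fails for two reasons. First, in Case 3 of the proof of Theorem \ref{main1} the extremal arc is not a coordinate arc at all: the bound comes from Lemma \ref{Key0}, i.e.\ from a branch of the polar curve $V_{z_1}(f)$ not contained in $\{z_1=0\}$, and the polar curve of $H_d+g$ is a perturbation of that of $H_d$ whose behaviour you have not controlled. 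Second, and more concretely, even in Case 2 the perturbation \emph{can} lower the order of the gradient along the coordinate arc when the weights are unequal, because a monomial of weighted degree $>d$ may still have small ordinary degree in $z_1$. For example take $w=(1,3)$, $d=6$, $H_d=z_1^6+z_2^2$ and $g=z_1^4z_2$ (weighted degree $7$). Along $\varphi(t)=(t,0)$ one has $\frac{\partial f}{\partial z_2}=2z_2+z_1^4$, of order $4$, so
$$
\frac{\operatorname{ord}(\text{grad}f(\varphi(t)))}{\operatorname{ord}(\varphi(t))}=4<5=\frac{d}{w_1}-1,
$$
and this arc only yields $L(f)\ge 4$; to reach $5$ one must follow the polar branch $z_2=-\tfrac12 z_1^4$. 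So the claim that $g$ ``does not affect the leading ratio'' along the coordinate arc is false, and the lower bound needs either the cited result of \cite{BKO} or a genuine analysis of the perturbed polar curve.
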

\bigskip
\section{Proofs of the Theorem \ref{main1},  Corollary
\ref{main2} and Corollary \ref{main3}}
\bigskip

Before starting the proofs, we will recall some important results on
 the geometric characterization of
$\mu$-constancy.

\begin{thm}[Greuel \cite{greuel}, L\^e-Saito \cite{LS}, Teissier \cite{BT}]\label{le-saito-tessier}
Let $F \colon (\C^n\times \C^m, 0) \to (\C, 0)$ be the deformation
of a holomorphic $f \colon (\C^n, 0) \to (\C, 0)$ with isolated
singularity. The following statements are equivalent.
\begin{enumerate}
\item $F$ is a $\mu$-constant deformation of $f$.

\item $\frac{\grad F}{\grad t_j}\in \overline{J(F_t)}$, where $\overline{J(F_t)}$
denotes the integral closure of the Jacobian ideal of $F_t$
generated by the partial derivatives of $F$ with respect to the
variables $z_1,\dots, z_n$.

\item  The deformation $ F(z, t) = F_t(z)$ is a Thom map, that is,
$$
\sum_{ j=1}^m |\frac{\grad F}{\grad t_j} | \ll \| \grad F\| \text{
as } (z, t) \to (0, 0).
$$

\item  The polar curve of $F$ with respect to $\{t = 0\}$ does not split,
that is,
$$
\{(z,t) \in \C^n \times \C^m \,\,| \,\,\grad_zF (z,t) = 0\} = \{0\}
\times \C^m \text{ near } (0,0).
$$

\end{enumerate}
\end{thm}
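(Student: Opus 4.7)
The plan is to prove the four-way equivalence by a cyclic chain of implications $(1) \Rightarrow (4) \Rightarrow (2) \Rightarrow (3) \Rightarrow (1)$, each step invoking a distinct classical ingredient: conservation of the Milnor number for $(1) \Leftrightarrow (4)$, the Lejeune--Teissier analytic characterization of integral closure for $(4) \Leftrightarrow (2)$, a curve-selection and order-comparison argument for $(2) \Rightarrow (3)$, and Thom's first isotopy lemma for $(3) \Rightarrow (1)$. The subtle step is the upgrade from the non-strict bound $\lesssim$ implicit in (2) to the strict domination $\ll$ required in (3).

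For $(1) \Leftrightarrow (4)$ I would fix a Milnor ball $B_{\varepsilon}$ for $f$ at $0$. By flatness and the conservation of the Milnor number in a deformation with isolated singularities, for all sufficiently small $t$ one has
\[
 \mu(f) \;=\; \sum_{z \in \mathrm{Crit}(F_t) \cap B_{\varepsilon}} \mu_z(F_t),
\]
with each summand at least $1$. Thus $\mu(F_t)=\mu(f)$ for all small $t$ if and only if $\mathrm{Crit}(F_t)\cap B_{\varepsilon}=\{0\}$; letting $t$ vary, this is precisely condition (4).

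For $(4) \Leftrightarrow (2)$ I would apply the Lejeune--Teissier criterion: a holomorphic germ $h$ belongs to the integral closure $\overline{I}$ of an ideal $I=(g_1,\dots,g_k)$ if and only if $|h(x)| \lesssim \max_i |g_i(x)|$ in a neighborhood of the origin. Taking $h=\grad F/\grad t_j$ and $I=J(F_t)$ rephrases (2) as $|\grad F/\grad t_j| \lesssim \|\grad_z F\|$. Granted (4), the zero set of $\grad_z F$ is the smooth axis $\{0\}\times\C^m$, so the Łojasiewicz inequality gives $\|\grad_z F\| \gtrsim |z|^{\alpha}$ for some $\alpha>0$; combining this with the vanishing of each $\grad F/\grad t_j$ on $\{z=0\}$ and a curve-selection order comparison yields the required $\lesssim$. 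Conversely, (2) forces every zero of $\grad_z F$ to annihilate each $\grad F/\grad t_j$ as well; together with the isolated singularity hypothesis on $F_t$ this confines the critical set to $\{0\}\times\C^m$, giving (4).

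For $(2) \Leftrightarrow (3)$, the direction $(3)\Rightarrow(2)$ is automatic since $\|\grad F\|\ge\|\grad_z F\|$ and $\ll$ implies $\lesssim$. For $(2)\Rightarrow(3)$ I would argue by contradiction via the curve selection lemma: if the strict domination failed, there would exist an analytic arc $\gamma(s)=(z(s),t(s))$ through the origin along which $\mathrm{ord}(\grad F/\grad t_j\circ\gamma)=\mathrm{ord}(\grad_z F\circ\gamma)$; a limiting-tangent argument, exploiting the local $\C^{*}$-scaling, would then extract from $\gamma$ a branch of $\{\grad_z F=0\}$ escaping the axis $\{0\}\times\C^m$, contradicting the already-established (4). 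This passage from $\lesssim$ to $\ll$ is the main technical obstacle and is the step where $\mu$-constancy enters in a non-trivial way. Finally, $(3)\Rightarrow(1)$ is classical: the Thom $a_F$ condition supplies continuous vector fields lifting each $\partial/\partial t_j$ and tangent to the fibres of $F$; their flow, provided by Thom's first isotopy lemma, gives a local topological trivialization of the family, and topological triviality along the parameter direction forces the Milnor number to be constant.
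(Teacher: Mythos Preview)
The paper does not prove this theorem at all: it is stated with attribution to Greuel, L\^e--Saito, and Teissier and invoked as a black box in the proof of Theorem~\ref{main1} (only the implication $(4)\Rightarrow(1)$ is actually used there). There is therefore no ``paper's own proof'' to compare your proposal against.

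That said, a few comments on your sketch. The equivalence $(1)\Leftrightarrow(4)$ via conservation of the total Milnor number is exactly the standard argument and is fine. Your step $(4)\Rightarrow(2)$, however, is where the real content of the Teissier/L\^e--Saito result lies, and your outline is too thin: knowing that $\|\grad_z F\|\gtrsim |z|^{\alpha}$ uniformly in $t$ (itself not immediate) together with the vanishing of $\grad F/\grad t_j$ on $\{z=0\}$ does \emph{not} by itself yield $|\grad F/\grad t_j|\lesssim\|\grad_z F\|$; you would need the order of vanishing of $\grad F/\grad t_j$ along $\{z=0\}$ to be at least $\alpha$, and nothing you have written forces that. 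The classical proofs go instead through Teissier's idempotency/integral-dependence machinery or the principle of specialization of integral dependence, not through a bare {\L}ojasiewicz bound. Similarly, in $(2)\Rightarrow(3)$ your ``limiting-tangent argument \dots\ extracting a branch of $\{\grad_z F=0\}$ escaping the axis'' is asserted rather than argued; the passage from $\lesssim$ to $\ll$ is genuinely delicate and is usually handled either by a careful Whitney/Thom-stratification argument or by showing directly that integral dependence forces strictly higher order along every arc. If you want to give an independent proof rather than a citation, those two steps need substantial expansion.
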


\subsection{Proof of Theorem \ref{main1}}

First, by the proposition \ref{Hassane} we get
$$
\|\text{grad}_wf(z)\|_w^2=\sum_{i=1}^n |\rho^{w_i}(z) \frac{\partial
f}{\partial z_i}(z) |^2\gtrsim \rho(z)^{2d}.
$$
Therefore,
$$
\rho(z)^{\min\{w_i\}}\| \text{grad} f(z)\|\gtrsim \rho(z)^{d}.
$$
Hence
$$
\| \text{grad} f(z)\|\gtrsim
\left(\sum_{i=1}^n|z_i|^{\frac{1}{w_i}}\right)^{d-\min\{w_i\}}
\gtrsim
 |z|^{
\frac{d-\min{w_i}}{\min{w_i}} }=|z|^{\max_{ i=1}^n (\frac{d}{w_i} -
1)},
$$
it follows that $L(f)\leq\max_{i=1}^n(\frac{d}{w_i} - 1)$.\\

In order to   show the opposite inequality we need the following
lemma.

\begin{lem}\label{Key0}
Let $f\in\mathcal{O}_n$ be a weighted homogeneous isolated
singularity of type $(d; w)$. Suppose that $w_k=\min_{i=1}^n{w_i}$
and $V_{z_k}(f)\nsubseteq\{z_k=0\}$, where
$$
V_{z_k}(f)=\left\{z\in\C^n \; :\,\frac{\partial f}{\partial
z_1}(z)=\cdots=\frac{\partial f}{\partial z_{k-1}}(z)=\frac{\partial
f}{\partial z_{k+1}}(z)=\cdots=\frac{\partial f}{\partial
z_n}(z)=0\right\}.
$$
Then $L(f)=\frac{d}{w_k} - 1=\max_{ i=1}^n (\frac{d}{w_i} - 1).$

\end{lem}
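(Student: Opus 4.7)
The plan is to piece together the statement from two inequalities. The upper bound $L(f)\le \max_{i=1}^n(\frac{d}{w_i}-1)$ has already been established just before the lemma in the course of proving Theorem~\ref{main1}, and since by hypothesis $w_k=\min_i w_i$, this reads simply $L(f)\le \frac{d}{w_k}-1$. The whole content of the lemma therefore reduces to producing a matching lower bound, and for this I would invoke the analytic-arc characterization~(\ref{0.1}) of $L(f)$: it suffices to exhibit one nonzero arc $\varphi(t)\in\C\{t\}^n$, $\varphi(0)=0$, along which the ratio $\text{ord}(\text{grad}f(\varphi(t)))/\text{ord}(\varphi(t))$ equals $\frac{d}{w_k}-1$.

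The natural candidate is the weighted orbit through a witness of the hypothesis. I pick any $z^0\in V_{z_k}(f)$ with $z_k^0\neq 0$ (which exists by assumption) and set
$$
\varphi(t)=t\cdot z^0=(t^{w_1}z_1^0,\dots,t^{w_n}z_n^0).
$$
Each $\partial f/\partial z_i$ is weighted homogeneous of degree $d-w_i$, so the definition of $V_{z_k}(f)$ gives
$\partial f/\partial z_i(\varphi(t))=t^{d-w_i}\,\partial f/\partial z_i(z^0)=0$ for every $i\neq k$, while
$$
\tfrac{\partial f}{\partial z_k}(\varphi(t))=t^{d-w_k}\,\tfrac{\partial f}{\partial z_k}(z^0).
$$

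The crucial point is the non-vanishing $\partial f/\partial z_k(z^0)\neq 0$. If it vanished, then \emph{all} partial derivatives of $f$ would vanish at $z^0$; but $z^0\neq 0$ (because $z_k^0\neq 0$), contradicting the assumption that $f$ has an isolated critical point at the origin. Hence $\text{ord}(\text{grad}f(\varphi(t)))=d-w_k$.

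Finally, because $z_k^0\neq 0$ and $w_k=\min_i w_i$, the lowest-degree monomial among the components $t^{w_i}z_i^0$ is $t^{w_k}z_k^0$, so $\text{ord}(\varphi(t))=w_k$. Plugging into~(\ref{0.1}) yields
$$
L(f)\ \ge\ \frac{\text{ord}(\text{grad}f(\varphi(t)))}{\text{ord}(\varphi(t))}=\frac{d-w_k}{w_k}=\frac{d}{w_k}-1,
$$
which matches the upper bound and closes the argument. The only real obstacle is the non-vanishing of $\partial f/\partial z_k(z^0)$; everything else is a direct book-keeping with weighted homogeneity and the minimality of $w_k$.
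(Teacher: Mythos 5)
Your argument is correct, and it is worth noting that the paper does not actually prove this lemma: it simply cites \cite{KOP}, Proposition~2, so you are supplying the argument that the reference contains. Your route --- take a witness $z^0\in V_{z_k}(f)$ with $z_k^0\neq 0$ and push it into the origin along its weighted orbit $\varphi(t)=t\cdot z^0$ --- is the natural one and is essentially the Krasi\'nski--Oleksik--P{\l}oski argument; the weighted homogeneity of each $\partial f/\partial z_i$ is exactly what keeps the whole orbit inside $V_{z_k}(f)$ and makes the order computation trivial. One step deserves more careful phrasing: you rule out $\partial f/\partial z_k(z^0)=0$ by saying that $z^0\neq 0$ would then contradict the isolatedness of the critical point \emph{at the origin}, but $z^0$ need not lie in the neighborhood of $0$ where that isolatedness holds. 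The repair is the orbit you already have: if all partials vanished at $z^0$, then by weighted homogeneity they would vanish along the entire curve $t\cdot z^0$, a one-parameter family of critical points accumulating at $0$, and this does contradict the isolated singularity hypothesis (equivalently, the critical locus of a weighted homogeneous polynomial is a cone for the $\C^*$-action, so isolated at $0$ forces it to be $\{0\}$ globally). With that one-line addition the proof is complete; the computation $\mathrm{ord}(\mathrm{grad}\,f(\varphi(t)))=d-w_k$ and $\mathrm{ord}(\varphi(t))=w_k$ (using $z_k^0\neq 0$ and $w_k=\min_i w_i$) and the matching upper bound from the first part of the proof of Theorem~\ref{main1} are all correct.
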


\begin{proof}
See \cite{KOP}, Proposition $2$.
\end{proof}

We now want to prove the opposite inequality.
 Modulo a permutation coordinate of $\C^n$, we may assume that $w_1\leq
w_2\leq\cdots\leq w_n$. Since $f$ be a weighted homogeneous of
degree $d$ with  isolated singularity, It is easy to check that the
monomial $z_1^{q_1}$ or $z_1^{q_1}z_i$ appear in the expansion of
$f$. There are three cases to be considered.

{\bf Case 1.} In this case, we suppose $z_1z_i$ appear in the
expansion of $f$, since $f$ defining an isolated singularity at the
origin $0\in \C^n$, there exist the terms $z_n^{q_n}$ or
$z_n^{q_n}z_j$ with non-zero coefficients in $f$.

 We first consider  the case whereby
$z_n^{q_n}z_j$ appear in $f$, from the hypotheses $d=w_1+w_i\geq
2w_n\geq\cdots \geq2w_1$, then we may write
$$
d=q_n w_n+w_j\geq( q_n-1)w_n + w_i + w_j\geq w_i +w_j\geq
w_i+w_1=d\geq w_i +w_i\geq w_1+w_i=d,
$$
Therefore, $q_n=1$ and $w_1=w_2=\cdots=w_n$.

 We will next consider  the case whereby
$z_n^{q_n}$ appear in $f$, since $\grad f(0)=0$, we have  $q_n\geq
2$, it follows that
$$
d=q_nw_n\geq (q_n-1)w_n+w_i\geq w_1+w_i=d,
$$
hence $q_n=2$ and $w_1=w_2=\cdots=w_n$.

In the homogenous case $w_1=w_2=\cdots=w_n$, for any nonzero $a\in
\C^n$, along the curve $\varphi(t)=t\cdot a=(t^{w_1}a_1,\dots,
t^{w_n}a_n)$, we obtain  $ \grad f(\varphi(t))=t^{d-w_1}\grad f(a)
$, it follows from (\ref{0.1}) that
$$
L(f)\geq \frac{\text{ord}(\grad
f(\varphi(t))}{\text{ord}(\varphi(t))} =\frac{d}{w_1}-1=\max_{
i=1}^n (\frac{d}{w_i} - 1).
$$
This ends the proof of Theorem \ref{main1} in the first case.\\

{\bf Case 2.} In this case, we suppose $z_1^{q_1}$ appear in the
expansion of $f$ and $z_1z_i$ doesn't appear for $i=2,\dots,n$. Take
an analytic path $\varphi (t)=(t,0,\dots,0)$, then from (\ref{0.1})
we get
$$
L(f)\geq \frac{\text{ord}(\grad
f(\varphi(t))}{\text{ord}(\varphi(t))} =q_1-1=\frac{d}{w_1}-1=\max_{
i=1}^n (\frac{d}{w_i} - 1).
$$
This ends the proof of Theorem \ref{main1} in the second case.\\

{\bf Case 3.} In this case,  we suppose that $z_1^{q_1}z_i$ appear
in the expansion of $f$ with $q_1\geq 2$. By lemma \ref{Key0} it is
enough to prove that $V_{z_1}(f)\nsubseteq\{z_1=0\}$. Indeed,
suppose that $V_{z_1}(f)\subset\{z_1=0\}$. Then, we let the
deformation $ F(z,t)=f(z)+tz_1^{q_1} $ of $f$. Since,
$$
V_t(F)= \{(z,t)\in\C^n\times \C \,\,| \,\,\grad_zF (z,t) =
0\}\subset V_{z_1}(F_t)=V_{z_1}(f)\subset\{z_1=0\},
$$
this means that
$$
\grad_zF(z,t)=0 \text{ if and only if } \grad f(z)= 0.
$$
Since $f$ defining an isolated singularity, and hence, by $(4)$ in
theorem \ref{le-saito-tessier} we get that $F_t$ is $\mu$-constant.
According to the result of Varchenko's theorem \cite{ANV}, the
monomial $z_1^{q_1}$ verifies $d_w(z_1^{q_1})=q_1w_1\geq  d$. But
$d=q_1w_1 +w_i>q_1w_1\geq d$, which is a contradiction. This
completes the proof of Theorem \ref{main1}.

%%%%%%%%%%%%%%%%%%%%%%%%%%%%%%%%%%%%%%%%%%%%%%

\subsection{Proof of Corollary  \ref{main2}}

Let $f_t(z) =f(z) + \sum_{\nu}c_{\nu}(t)z^{\nu}$ be a deformation
$\mu$-constant of  a weighted homogeneous polynomial $f$ of degree
$d$ with isolated singularity. Since $c_{\nu}(0)=0$, we can write
$$
f_t(z) =f(z) + tg_t(z).
$$
By a result of Varchenko's theorem \cite{ANV}, the deformation $g_t$
verifies $d_w(g_t)\geq  d$  for all $t$. This together with
(\ref{1.1}) and (\ref{1.2}) gives
$$
\aligned \|\text{grad}_wf_t(z)\|_w&\geq\|\text{grad}_wf(z)\|_w-|
t|\|\text{grad}_wg_t(z)\|_w\\
 &\gtrsim \rho^d(z),\quad \text{ as } |t|\ll 1.
 \endaligned
$$
Moreover, by a similar argument to the proof of the first inequality
in theorem \ref{main1} we find the following :
$$
L(f_t)\leq\max_{ i=1}^n (\frac{d}{w_i} - 1).
$$
By the semicontinuity of the {\L}ojasiewicz exponent in holomorphic
$\mu$-constant families of isolated singularities \cite{AP, BT}, we
find that $L(f)=\max_{ i=1}^n (\frac{d}{w_i} - 1)\leq L(f_t)$.
 Then the result follows.
%%%%%%%%%%%%%%%%%%%%%%%%%%%%%%%%%%%%%%%%
\subsection{Proof of corollary \ref{main3}}
Let $d=d_w(f)$, it says that $f$ can be writen in the form
$$
f(z)=H_d(z)+H_{d+1}(z)+\cdots; H_d\neq 0.
$$
Since
$$
\|\text{grad}_wf(z)\|_w\geq\|\text{grad}_wH_d(z)\|_w-\sum_{j>d}\|\text{grad}_w
H_j(z)\|_w.
$$
It follows from the isolated singularity of $H_d$  with (\ref{1.1})
and (\ref{1.2}), we obtain
$$
\|\text{grad}_wf(z)\|_w\gtrsim \rho^d(z).
$$
By a similar argument to the proof of the first inequality in
theorem \ref{main1} we find the following :
$$
L(f)\leq\max_{ i=1}^n (\frac{d}{w_i} - 1).
$$
From the theorem \ref{main1} and proposition $4.1$ in \cite{BKO}, we
find
$$
\max_{ i=1}^n (\frac{d}{w_i} - 1)=L(H_d)\leq L(f).
$$
This complete the proof of the corollary.

 %%%%%%%%%%%%%%%%%%%%%%%%%%%%%%%%%%%%%%%%%
 \begin{rem}
There is another (weaker) definition of a weighted homogeneous
polynomial. A polynomial $f  \in\C[z_1,\dots,z_n]$ is called a weak
weighted homogeneous polynomial,  if there exist $n$ integers
positive (weights) $w=(w_1,\dots,w_n)$ such that
 $f$ may be written as a sum of monomials $z_1^{\alpha_1}\cdots
z_n^{\alpha_n}$ with
$$
\alpha_1 w_1 +\dots +\alpha_n w_n =d.
$$
%We droop only the hypothesis $d\geq 2w_i$ for $i=1,\dots,n$.
For
three variables  $n=3$, Krasi\'nski, Oleksik and  P{\l}oski
  proof in \cite{KOP}
  that
  $$
  L(f)=\min\left(\max_{ i=1}^n (\frac{d}{w_i}
- 1),\; \mu (f)\right).
$$
 But this is not valid  for $n$ grater than
$3$, indeed,  let
$$
f(z_1,z_2,z_3,z_4)=z_1z_4 +z_1^{10}+z_2^5+z_3^5,
$$
is weak weighted homogeneous of type $(10; 1, 2, 2, 9)$.
 Since
$\mu(f)=\prod_i^n (\frac{d}{w_i}-1)$ by the Milnor-Orlik formula
\cite{MO}, then $\mu(f)=16$. Moreover, it easy to cheek that
$L(f)=4$ and  $\max_{ i=1}^n (\frac{d}{w_i} - 1)=10$, hence
$L(f)<\min(\max_{ i=1}^n (\frac{d}{w_i} - 1),\; \mu (f))$.
\end{rem}

\section{The maximal and minimal coordinates}

 The class of weak weighted homogeneous polynomials is
broader than the class of weighted homogeneous polynomials. In order
to extend our main result to this class, we introduce the maximal
and the minimal coordinate.

\begin{defn}
 Let $f  \in\C[z_1,\dots,z_n]$ be a weak weighted
homogenous of type $(d;w_1,\dots,w_n)$, we set
$$
\aligned
 M(w)&=\left\{i\in\{1,\dots,n\} \; \mid \; d<2w_i\right\}, \\
I_{max_1}&=\max\left\{i\in M(w)\right\},\; I_{max_2}=\max\left\{i\in
M(w)-\{I_{max_1}\}\right\},\dots\\
I_{max_k}&=\max\left\{i\in
M(w)-\{I_{max_1},\dots,I_{max_{k-1}}\}\right\},
\endaligned
$$
where $k$ is the cardinal of $M(w)$. We have $M(w)=\{I_{max_1},
\dots,I_{max_k}\}$. We set
$$
\aligned
 I_{min_1}&=\begin{cases} I_{max_1} \text{ if }z_iz_{I_{max_1}} \text{ don't appear in } f \;\forall i=1,\dots n,\\
  \min\left\{i\in\{1,\dots,n\} \; \mid \; z_iz_{I_{max_1}}\text{ appear in } f
  \right\}.
 \end{cases}, \\
I_{min_2} &=\begin{cases} I_{max_2} \text{ if }z_iz_{I_{max_2}} \text{ don't appear in } f \;\forall i=1,\dots n,\\
  \min\left\{i\in\{1,\dots,n\}-\{I_{min_1}\} \; \mid \; z_iz_{I_{max_2}}\text{ appear in } f
  \right\}.
 \end{cases} ,\\
\dots&\dots\\
I_{min_k} &=\begin{cases} I_{max_k} \text{ if }z_iz_{I_{max_k}} \text{ don't appear in } f \;\forall i=1,\dots n,\\
  \min\left\{i\in\{1,\dots,n\}-\{I_{min_1},\dots,I_{min_{k-1}}\} \; \mid \; z_iz_{I_{max_2}}\text{ appear in } f
  \right\}.
 \end{cases}
 .
\endaligned
$$
\end{defn}

We put $I(f)=\left\{ I_{min_1},\dots, I_{min_k}\right\}$, we define
the maximal coordinates of the variables, the $z_i$, for $i\in
M(w)$, i.e., the coordinates of weights  $w(z_i)=w_i>\frac{d}{2}$,
also we called the minimal coordinates of the variables, the $z_i$,
for $i\in I(f)$. Finally, we set $M(f)=M(w)\cup I(f)$, $\ell(f)$ the
cardinal of $M(f)$ and
$w_{M(f)}=(w_1,\dots,\widehat{w_k},\dots,w_n)$, where the hat means
omission of all $w_k$ such that $\k\in M(f)$.

Now we are ready to extend our main result.

\begin{thm}\label{main4}
Let $f\in \mathcal{O}_n$ be a weak weighted homogeneous polynomial
of type $(d; w_1,\dots,w_n)$ defining an isolated singularity at the
origin. Then
$$
L(f)=\begin{cases}
\max_{ i\notin M(f)} (\frac{d}{w_i} - 1)& \text{ if } \ell(f)<n\\
1 & \text{ if } \ell(f)=n.
\end{cases}
$$
\end{thm}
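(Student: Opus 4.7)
\bigskip
\noindent\textbf{Proof plan.}
The plan is to reduce Theorem~\ref{main4} to Theorem~\ref{main1} by means of a weighted-homogeneous change of coordinates that splits off the maximal/minimal variable pairs, leaving behind a weighted homogeneous residue $\tilde f$ in the variables $z_i$, $i\notin M(f)$, to which the main theorem applies directly.

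The starting observation is that $w_i>d/2$ for $i\in M(w)$ forces every weighted-degree-$d$ monomial of $f$ to contain $z_i$ with multiplicity at most one. Consequently
\[
f(z)=\sum_{i\in M(w)}A_i(z^{(i)})\,z_i+B(z^{M(w)}),
\]
where $z^{(i)}$ denotes the tuple obtained from $z$ by omitting $z_i$, $z^{M(w)}$ denotes the tuple obtained by omitting every $z_j$ with $j\in M(w)$, each $A_i$ is weighted homogeneous of degree $d-w_i$, and $B$ is weighted homogeneous of degree $d$ in the remaining variables. Whenever $I_{\min_j}\neq I_{\max_j}$, the monomial $z_{I_{\min_j}}z_{I_{\max_j}}$ contributes a nonzero linear-in-$z_{I_{\min_j}}$ coefficient $c_j$ to $A_{I_{\max_j}}$. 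Setting $z'_{I_{\min_j}}:=A_{I_{\max_j}}/c_j$ is then an invertible weighted-homogeneous change of coordinates of the correct degree $d-w_{I_{\max_j}}=w_{I_{\min_j}}$, and it turns the contribution $A_{I_{\max_j}}\,z_{I_{\max_j}}$ into the Morse pairing $c_j\,z'_{I_{\min_j}}\,z_{I_{\max_j}}$.

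I would then carry out an iterative weighted shift $z_{I_{\max_j}}\mapsto z_{I_{\max_j}}+\gamma_j$, where $\gamma_j$ is a weighted-homogeneous polynomial of degree $w_{I_{\max_j}}$ in the variables other than $z_{I_{\max_j}}$, chosen degree-by-degree so as to absorb into the Morse pairing every monomial of $B$ divisible by some $z'_{I_{\min_j}}$. Since the set of weight-$d$ monomials is finite, this process terminates and produces the split form
\[
f=\sum_{j=1}^{k}c_j\,z'_{I_{\min_j}}\,z'_{I_{\max_j}}+\tilde f\bigl(z_i:i\notin M(f)\bigr),
\]
with $\tilde f$ weighted homogeneous of degree $d$ in variables all satisfying $w_i\leq d/2$, i.e.\ $d\geq 2w_i$. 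Thom--Sebastiani gives $\mu(f)=\mu(\tilde f)$, so $\tilde f$ inherits the isolated-singularity hypothesis, and Theorem~\ref{main1} yields $L(\tilde f)=\max_{i\notin M(f)}(d/w_i-1)$. The Morse block has {\L}ojasiewicz exponent $1$ and involves coordinates disjoint from those of $\tilde f$, so a direct inspection of $\|\text{grad}\,f\|$ against $|z|$ gives $L(f)=\max\bigl(1,L(\tilde f)\bigr)$: when $\ell(f)<n$ each remaining weight $w_i\leq d/2$ forces $L(\tilde f)\geq 1$ and one recovers the first branch of the theorem, while when $\ell(f)=n$ the residue $\tilde f$ is absent and $f$ reduces to a nondegenerate Morse form of rank $n$, giving $L(f)=1$. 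The matching lower bound comes from plugging the extremal path of $\tilde f$ furnished by Theorem~\ref{main1} into~(\ref{0.1}): this path is supported in the coordinates $z_i$, $i\notin M(f)$, so $\text{grad}\,f=\text{grad}\,\tilde f$ along it.

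The delicate point is the residual case $I_{\min_j}=I_{\max_j}$: by the definition of $I_{\min_j}$ this occurs precisely when no monomial $z_iz_{I_{\max_j}}$ appears in $f$, so that $A_{I_{\max_j}}$ has no linear term and the change of coordinates above is unavailable. I expect one must treat this case directly, combining the isolated-singularity hypothesis, which forces $A_{I_{\max_j}}$ to be a nonzero weighted-homogeneous polynomial of degree $w_{I_{\max_j}}>d/2$ in the other variables, with a lower bound for $\|\text{grad}\,f\|$ in the spirit of Proposition~\ref{Hassane}, so as to show that including $z_{I_{\max_j}}$ in $M(f)$ still produces the stated formula. Verifying that the iterative shifts in the second step genuinely preserve the weighted-homogeneous grading is the other point needing attention, but once the list of weight-$d$ monomials containing $z_{I_{\max_j}}$ is spelled out it reduces to straightforward bookkeeping.
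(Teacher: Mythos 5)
Your overall strategy coincides with the paper's: both arguments split off the hyperbolic pairs $z_{I_{\min_j}}z_{I_{\max_j}}$ by weighted-homogeneous changes of coordinates, reduce to a weighted homogeneous polynomial in the variables $z_i$, $i\notin M(f)$, all of whose weights satisfy $d\geq 2w_i$, and then invoke Theorem \ref{main1} (resp.\ the splitting lemma when $\ell(f)=n$). However, your write-up has a genuine hole exactly where you flag ``the delicate point'': the case $I_{\min_j}=I_{\max_j}$ is not handled, only postponed with a sketch that moreover starts from a false premise ($A_{I_{\max_j}}$ has weighted degree $d-w_{I_{\max_j}}<d/2$, not $>d/2$). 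The observation you are missing --- and the one the paper's argument rests on --- is that this case is vacuous: for an isolated singularity some monomial $z_{I_{\max_j}}^{q}$ or $z_{I_{\max_j}}^{q}z_l$ must occur in $f$, and $w_{I_{\max_j}}>d/2$ together with $\grad f(0)=0$ forces $q=1$, hence a monomial $z_l z_{I_{\max_j}}$ with $l\neq I_{\max_j}$ does appear. Until you prove this (or genuinely treat the case), the decomposition into Morse blocks is not established.

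A second, unacknowledged issue is the invertibility of the simultaneous substitution $z'_{I_{\min_j}}:=A_{I_{\max_j}}/c_j$. Each $A_{I_{\max_j}}$ may contain minimal variables of the same weight other than $z_{I_{\min_j}}$, and the resulting linear system among equal-weight minimal variables need not be invertible a priori (already with weights $(1,1,3,3)$, $d=4$, and $A_3$, $A_4$ both proportional to $z_1+z_2$, the two new coordinates coincide); one must rule this degeneracy out using the isolated-singularity hypothesis, which you do not do. The paper sidesteps both difficulties by eliminating one pair at a time: at each stage it shows that every monomial containing the top-weight variable $z_n$ has the form $z_jz_n$ with $w_j=w_{I_{\min_1}}$, performs the two manifestly invertible triangular substitutions $\xi_1=z_1+\sum_{j>1}a_jz_j$ and $\xi_n=z_n+g$, and recurses on $f(0,z_2,\dots,z_{n-1},0)$. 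With these two points repaired your argument becomes a correct variant of the paper's proof; the remaining ingredients (the direct-sum formula $L(f)=\max(1,L(\tilde f))$ and the lower bound via a path supported in the untouched coordinates) are sound.
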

Note that if $d \geq2w_i$ for all $i =1,\dots,n$, then
$M(f)=\emptyset$ and we recover theorem \ref{main1}.

\begin{proof}
Without loss of generality, we suppose that $w_1\leq
w_2\leq\cdots\leq w_n$. By the proof of the main result, only the
first case in the opposite inequality can be considered. This
remains us to consider the case where $z_1z_i$ appears in the
expansion of $f$.

 Let $d<2w_{n-k+1}\leq\cdots\leq 2w_n$, and so $M(w)=\{n, \dots,n-k+1\}=\{I_{max_1},
\dots,I_{max_k}\}$. Since $f$ defining an isolated singularity, it
is easy to check that the monomial $z_n^q$ or $z_n^qz_j$ appear in
expansion of $f$, then $qw_n=d$ or $qw_n+w_j=d$, but $\grad f (0)=0$
and $d<2w_n$, so that $z_nz_j$ appear in $f$. Moreover, for any
monomial $z_1^{\alpha_1}\cdots z_n^{\alpha_n}$ of $f$ with
${\alpha_n}\neq 0$, we have
$$
2w_n>d=\sum_{j<n}\alpha_jw_j +\alpha_nw_n\geq
\left(\sum_{j<n}\alpha_j\right)w_1+ w_n\geq w_1 +w_i=d.
$$
Then, $\sum_{j<n}\alpha_j=1$, ${\alpha_n}=1$ and $w_i=w_n$.

Therefore we may write
$$
f(z)=a_{I_{min_1}}z_{I_{min_1}}z_n + \sum_{j\neq I_{min_1}} a_j
z_jz_n+ f(z_1,\dots, z_{n-1}, 0),\quad a_{I_{min_1}}\neq0,
$$
for $a_j\neq0$, we have $d=w_n+w_{I_{min_1}} =w_n+w_j=w_1+w_i$, so
we obtain $w_j=w_{I_{min_1}}=w_1$. After permutation of coordinates
with same weights it can be written as
$$
f(z)=z_1z_n + \sum_{j>1} a_j z_jz_n+ f(z_1,\dots, z_{n-1}, 0),
$$
 Then we may assume, by a change of coordinates
$\xi_{1}=z_{1} + \sum_{j>1} a_jz_j$, that $ f(z)=z_1z_n+ f(z_1,\dots
z_{n-1}, 0)=z_1(z_n + g(z))+f(0,z_2\dots,z_{n-1}, 0)$
 also by a change of
coordinates $\xi_n=z_n +g(z)$, we can assume that
$$
f(z)=z_1z_n +f(0,z_2,\dots, z_{n-1},0).
$$
We set $h(z_1,\dots,z_{n-2})=f(0,z_1,\dots,z_{n-2},0)$, obviously
implies $L(f)=L(h)$. For $M(f)\neq\{1,\dots,n\}$, it follows by
elimination of the maximal and minimal coordinates that $L(f)=L(h)$,
where
 $h\in\mathcal{O}_{n-\ell(f)}$ be weighted homogenous of type
 $(d;w_{M(f)})$. Therefore by theorem \ref{main1}, we get
$$
 L(f)=L(h)=\max_{ i\notin M(f)} (\frac{d}{w_i} -
 1).
 $$
 For $M(f)=\{1,\dots,n\}$, then we can suppose, by   the splitting lemma,
 that $f(z)=z_1^2+\dots+z_n^2$, thus $L(f)=1$. The Theorem \ref{main4} is
 proved.
\end{proof}

\begin{example}
Let
$$
f(z)=z_1z_6+z_1^{12}+z_2z_5+z_3^4+z_4^3+z_2^6,
$$
$f$ is weak weighted homogenous of type $(12;1,2,3,4,10,11)$ with
isolated singularity, since $M(w)=\{5,6\}$ and
$M(f)=\{1,2,5,6\}\subsetneq\{1,\dots,6\}$, then by theorem
\ref{main4} we get
 $$
 L(f)=\max_{ i\notin M(f)} (\frac{d}{w_i}-1) =3.
 $$
\end{example}
\begin{example}
Let
$$
f(z)=z_1z_6+z_2z_5+z_3z_4,
$$
$f$ is weak weighted homogenous of type $(12;1,2,3,9,10,11)$
defining an isolated singularity, since $M(w)=\{4,5,6\}$ and
$M(f)=\{1,\dots,6\}$, then by theorem \ref{main4} we get $L(f)=1$.
Also $f$ can be seen as weighted homogenous of type
$(2;1,1,1,1,1,1)$, and hence by theorem \ref{main1}, $L(f)=1$.
\end{example}

\begin{rem}

For a weak weighted homogeneous polynomials $f \in\C[z_1,\dots,z_n]$
defining an isolated singularity, our proof of the first equality in
the main theorem is valid. Since it is well known that $\mu(f)\geq
L(f)$, it follows that
\begin{equation}\label{n=3}
L(f)\leq\min\left(\max_{ i=1}^n (\frac{d}{w_i} - 1), \mu(f) \right),
\end{equation}
If $\ell(f)=0$, by theorem \ref{main4}, we obtain $L(f)=\max_{
i=1}^n (\frac{d}{w_i} - 1)$. Also, if $\ell(f)=n-1$, by splitting
lemma, we have $L(f)=\mu(f)$.

 Moerover, it easy to check that $\ell(f)$ is even, then for $n=3$, we have
$\ell(f)=0$ or $2$. Then, we get the result of Krasi\'nski, Oleksik
and  P{\l}oski
   in \cite{KOP}, that is
$$
L(f)=\min\left(\max_{ i=1}^n (\frac{d}{w_i} - 1), \mu(f) \right).
$$

Finally, using the processus of elimination  of the maximal and
minimal coordinates, the result of corollary \ref{main2} and
\ref{main3} can be extended  to the  class of weak weighted
homogeneous polynomials i.e., we drop the hypothesis $d\geq 2w_i$
for $i=1,\dots,n$ in the corollaries.

\end{rem}

\begin{note}
 A. Parusi\'nski called my attention to S. Brzostowski's
result \cite{SB}, which has independently proved  the main  theorem
of  this  paper. But his proof is different from ours.

\end{note}

\bigskip

\end{document}